\newcommand{\bc}{\begin{center}}
\newcommand{\ec}{\end{center}}
\newtheorem{theorem}{Theorem}
\newtheorem{proposition}{Proposition}
\newtheorem{corollary}{Corollary}
\newtheorem{lemma}{Lemma}
\begin{document}

\begin{frontmatter}

\title{Nordhaus-Gaddum bounds for locating domination}

\author[upc]{C.~Hernando}
\ead{carmen.hernando@upc.edu}

\author[upc]{M.~Mora}
\ead{merce.mora@upc.edu}

\author[upc]{I. M.~Pelayo\corref{cor1}}
\ead{ignacio.m.pelayo@upc.edu}

\cortext[cor1]{Corresponding author}

\address[upc]{Universitat Politècnica de Catalunya, Barcelona, Spain}

\author{}

\address{}

%%%%%%%%%%%%%%%%%%%%%%%%%%%%%%%%%%%%%%%%%%%%%%%%%%%%%%%%%%%%%%%%%%%%
%%%%%%%%%%%%%%%%%%%%%%%%%%%%%%%%%%%%%%%%%%%%%%%%%%%%%%%%%%%%%%%%%%%%
\begin{abstract}
\small A dominating set $S$  of  graph $G$ is called \emph{metric-locating-dominating} if it is also locating, that is,  if every vertex $v$  is uniquely determined by its vector of distances to the vertices in $S$. If moreover, every vertex $v$ not in  $S$ is also uniquely determined by the set of neighbors of $v$ belonging to $S$,  then it is said to be \emph{locating-dominating}. Locating, metric-locating-dominating and locating-dominating sets of minimum cardinality are called $\beta$-codes, $\eta$-codes and $\lambda$-codes, respectively. A Nordhaus-Gaddum bound is a tight lower or upper bound on the sum or product of a parameter of a graph $G$  and its complement $\overline G$. In this paper, we present some Nordhaus-Gaddum bounds for the location number $\beta$, the metric-location-domination number $\eta$ and the location-domination number $\lambda$. Moreover, in each case, the graph family attaining the corresponding bound is fully characterized.
\end{abstract}

\begin{keyword} \small
Domination \sep  Location \sep Locating domination \sep Nordhaus-Gaddum 
\end{keyword}

\end{frontmatter}

%\newpage
%%%%%%%%%%%%%%%%%%%%%%%%%Introduction%%%%%%%%%%%%%%%%%%%%%%%%%%%%%%%%%%%
%%%%%%%%%%%%%%%%%%%%%%%%%Introduction%%%%%%%%%%%%%%%%%%%%%%%%%%%%%%%%%%%
\section{Introduction}

%%%%%%%%%%%%%% basico %%%%%%%%%%%%%%%%%%%%%%%%%%%%%%%%

Given a graph $G=(V,E)$,
the \emph{(open) neighborhood} of a vertex $v\in V$ is $N_G(v)=N(v)=\{u\in V : uv\in E\}$. The distance between vertices $v,w\in V$ is denoted by $d_G(v,w)$, or $d(v,w)$ if the graph G is
clear from the context. The diameter $diam(G)$ is the maximum distance between any two vertices of $G$. Let $S=\{x_1,\ldots,x_k\}$ be a set of vertices  and let $v\in V\setminus S$. The ordered $k$-tuple $c_{\stackrel{}{S}}(v)=(d(v,x_1),\ldots,d(v,x_k))$ is called the vector of \emph{metric coordinates} of $v$ with respect to $S$. For further notation see \cite{chlezh11}.

A set $D\subseteq V$ is a \emph{dominating set} is for every vertex $v\in V\setminus D$, $N(v)\cap D\neq\emptyset$.
The \emph{domination number} $\gamma(G)$ is the minimum cardinality of a dominating set of $G$. A dominating set of cardinality $\gamma(G)$ is called a \emph{$\gamma$-code} \cite{hahesl98}. 
A set $D=\{x_1,\ldots,x_k\}\subseteq V$  is  a 
\emph{locating set} if for every pair of distinct vertices $u,v\in V$, 
$c_{\stackrel{}{D}}(u)\neq c_{\stackrel{}{D}}(v)$.
The \emph{location number}  (also called the \emph{metric dimension}) $\beta(G)$
is the minimum cardinality of a locating set of $G$~\cite{hame,slater75}. A locating set of cardinality $\beta(G)$ is called a \emph{$\beta$-code}. A \emph{metric-locating-dominating set}, a MLD-set for short, is any set of vertices that is both a dominating set and a locating set. The \emph{metric-location-domination number} $\eta(G)$ is the minimum cardinality of a metric-locating-dominating set of $G$. A metric-locating-dominating set of cardinality $\eta(G)$ is called a \emph{$\eta$-code}~\cite{heoe}. 
A set $D\subseteq V$ is  a \emph{locating-dominating set}, an LD-set for short,  if for every two vertices  $u,v\in V(G)\setminus D$, $\displaystyle \emptyset\neq N(u)\cap D\neq N(v)\cap D\neq\emptyset.$
The \emph{location-domination number}  $\lambda(G)$  
is the  minimum cardinality of a locating-dominating set.  A locating-dominating set of cardinality $\lambda(G)$ is called a \emph{$\lambda$-code}~\cite{slater88}. A complete and regularly updated list of papers
on locating dominating codes is to be found in \cite{lobstein}.

Clearly, every locating-dominating set is locating and also dominating. Moreover, both location and domination are hereditary properties. Particularly, if for two sets   $S_1,S_2\subset V$,  $S_1$ is locating and $S_2$ is dominating, then  $S_1\cup S_2$ is both locating and dominating.  Hence, for every graph $G$, 
$\max\{\gamma(G),\beta(G)\} \leq \eta(G) \leq \min\{\gamma(G)+\beta(G),\lambda(G)\}$ \cite{cahemopepu12}.

A Nordhaus-Gaddum bound is a tight lower or upper bound on the sum or product of a parameter of a graph $G$  and its complement $\overline G$ \cite{aoha12,hejoso11,jowo}. For example, in \cite{cohe77}  it was shown that for any graph $G$ of order $n$, $\gamma(G)+\gamma(\overline G)\le n+1$, the equality being true only if $\{G,\, \overline G\}=\{K_n,\, \overline K_n\}$. In this paper,  we present some Nordhaus-Gaddum bounds on the sum of  the location number $\beta$, the metric-location-domination number $\eta$ and the location-domination number $\lambda$. In all cases, the classes of graphs attaining both bounds are characterized.

%%%%%%%%%%%%%%%%%%%%%%%%%%%%%%%%%%%%%%%%%%%%%%%%%%%%%%%%%%%%%%%%%%%%%%%%
%%%%%%%%%%%%%%%%%%%%%%%%%%%%%%%%%%%%%%%%%%%%%%%%%%%%%%%%%%%%%%%%%%%%%%%%
\section{Nordhaus-Gaddum bounds}\label{valoresextremos}

Unless otherwise stated, along this section $G=(V;E)$ is a, not necessarily connected, nontrivial graph of order $n$.
A graph $G$ is called \emph{doubly-connected} if both $G$ and its complement $\overline G$ are connected. As usual, $K_n$, $C_n$ and $P_n$ denote respectively the complete graph, the cycle and the path on $n$ vertices.

%Two vertices $u,v$ are said  to be \emph{false twins} if $u$  and $v$  have the same set of neighbours, that is, if $N(u)=N(v)$, and they are called \emph{true twins} whenever $N[u]=N[v]$.
%
%\begin{lemma}
%$u,v$ are true twins in $G$ $\Longleftrightarrow$ $u,v$ are false twins in $\overline G$.
%\end{lemma}

%\newpage
%%%%%%%%%%%%%%%%%%%%%%%%%%%%%%%%%%%%%%%%%%%%%%%%%%%%%%%%%%%%%%%%%%%%%%
\subsection{Location number}

%\begin{lemma}
%If $S$ is a locating set of  $G$, $r\ge2$ and $\{v_1,\dots,v_r\}$ is a set of (true or false) twins, then $|\{v_1,\dots, v_r\}\cap S|=r-1$.
%\end{lemma}

\begin{theorem}\label{beta1} For every  nontrivial graph $G$, $2\le \beta(G)+\beta(\overline G)\le 2n-1$. Moreover,
\begin{itemize}
\item $\beta(G)+\beta(\overline G)=2$ if and only if $G=P_4$.
\item $\beta(G)+\beta(\overline G)=2n-1$ if and only if $\{G,\, \overline G\}=\{K_n,\, \overline K_n\}$.
\end{itemize}
\end{theorem}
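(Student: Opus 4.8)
The plan is to derive both inequalities from two extremal properties of the location number and then analyze the equality cases. First I would isolate: (F1) every nontrivial graph satisfies $\beta(G)\ge 1$, with $\beta(G)=1$ precisely when $G$ is a path; and (F2) $\beta(G)\le n$, with $\beta(G)=n$ precisely when $G=\overline{K_n}$, while $\beta(G)\le n-1$ whenever $G$ contains at least one edge. Fact (F1) is the classical identification of the graphs of location number one. For the upper part of (F2), if $G$ has an edge then some component has order at least two; deleting a vertex $v$ of that component gives $D=V\setminus\{v\}$, and since $v$ then has a finite coordinate and is the only vertex outside $D$, the set $D$ is locating, whence $\beta(G)\le n-1$. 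In the remaining case $G=\overline{K_n}$ every vertex is isolated, so any omitted vertex would have no finite coordinate, and therefore $\beta(\overline{K_n})=n$.

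Granting (F1), the lower bound is immediate: $\beta(G)+\beta(\overline G)\ge 1+1=2$, and equality forces $\beta(G)=\beta(\overline G)=1$, that is, both $G$ and $\overline G$ are paths. The substantive point here is that $P_4$ is the only graph with this property. I would prove it by counting edges: the edge sets of $G$ and $\overline G$ partition those of $K_n$, so two paths would satisfy $(n-1)+(n-1)=\binom{n}{2}$, which holds only for $n=4$. Since $P_4$ is the unique path on four vertices and $\overline{P_4}=P_4$, the equality case is exactly $G=P_4$, and this graph plainly attains the value $2$.

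For the upper bound I would split on whether $\overline{K_n}$ occurs. If neither $G$ nor $\overline G$ is $\overline{K_n}$, then both have an edge, so by (F2) both are at most $n-1$ and the sum is at most $2n-2$. Otherwise one of the two, say $G$, equals $\overline{K_n}$; then $\overline G=K_n$ with $\beta(\overline G)=n-1$, and the sum equals $n+(n-1)=2n-1$. This establishes $\beta(G)+\beta(\overline G)\le 2n-1$ and shows that equality can occur only when one summand equals $n$. As $\beta(G)=n$ characterizes $G=\overline{K_n}$, the extremal pairs are exactly $\{G,\overline G\}=\{K_n,\overline{K_n}\}$, which conversely realize the bound.

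I expect the delicate step to be (F2), specifically the clean dichotomy $\beta(G)=n\iff G=\overline{K_n}$ as opposed to $\beta(G)\le n-1$; this hinges on how the location number of graphs with isolated vertices, and of disconnected graphs generally, is treated, and it is exactly this behavior that singles out $\{K_n,\overline{K_n}\}$ as the unique maximizer. By contrast, once (F1) is available, the edge-counting identification of $P_4$ in the lower-bound equality is routine.
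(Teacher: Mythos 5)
Your proposal is correct and follows essentially the same route as the paper: the lower bound via $\beta(G)\ge 1$ together with the characterization of paths as the only graphs with location number $1$ (and $P_4$ as the unique self-complementary path), and the upper bound via the facts that $\overline{K_n}$ is the only graph with $\beta(G)=n$ and that $\beta(K_n)=n-1$. The only difference is cosmetic: where the paper cites these facts or calls them routine, you supply short self-contained verifications (the edge count $2(n-1)=\binom{n}{2}$ forcing $n=4$, and the deletion argument showing $\beta(G)\le n-1$ whenever $G$ has an edge), consistent with the paper's convention that a vertex with no finite coordinate is not located.
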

\begin{proof} Every graph satisfies $1\le\beta(G)$, which means that $2\le \beta(G)+\beta(\overline G)$. Moreover, the equality $\beta(G)+\beta(\overline G)=2$ is only true for $G=P_4$, since paths $P_n$ are the only graphs with location number 1 \cite{cherjooe00}, and  $P_4=\overline  P_4$ is the only nontrivial path whose complement is also a path.
The upper bound immediately follows from these facts: (1) the  graph $\overline K_n$ is the only graph with location number $n$ and (2) $\beta(K_n)=n-1$. Finally, claims (1) and (2) also allows us to derive that equality  $\beta(G)+\beta(\overline G)=2n-1$ only holds when $\{G,\, \overline G\}=\{K_n,\, \overline K_n\}$.
\end{proof}

\begin{figure}[ht]
  \begin{center}
        \includegraphics[width=.7\textwidth]{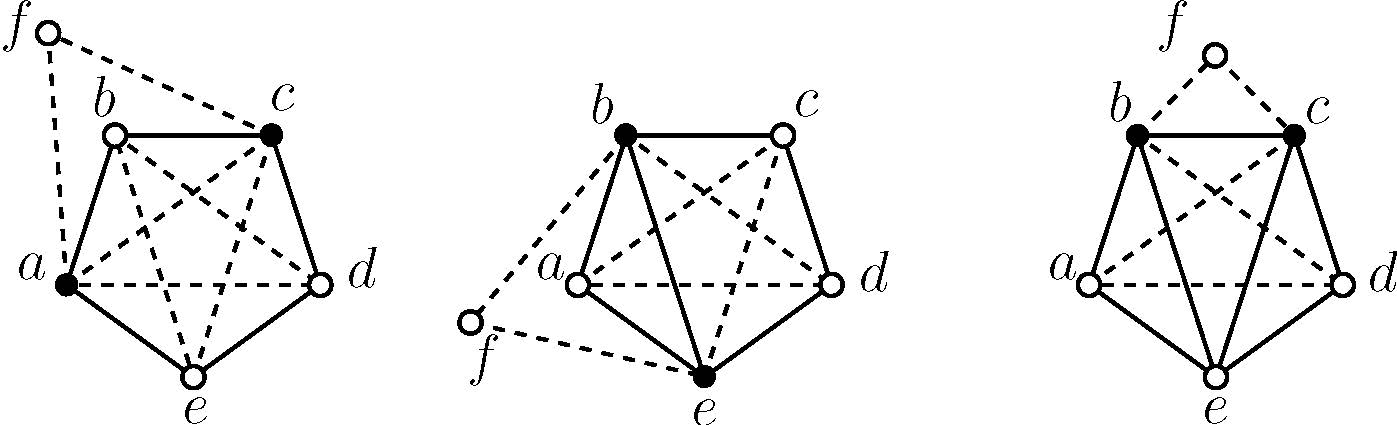}
  \end{center}
  \caption{Solid lines are edges in $G$ and dashed lines are edges in $\overline G$.}
  \label{casos}
\end{figure}

\begin{lemma}\label{n-3D2} Every  doubly-connected graph $G$  of order $n\ge6$ such that $diam(G)=diam(\overline G)=2$  contains a locating set of cardinality $n-4$.
\end{lemma}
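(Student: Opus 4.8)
The plan is to phrase the whole question in terms of which four vertices to \emph{discard}. Write $W=V\setminus D$ with $|W|=4$; the goal is to choose $W$ so that $D$ is locating. Since $diam(G)=2$, every coordinate $d(v,x)$ with $x\in D$ and $x\ne v$ lies in $\{1,2\}$, while $d(x,x)=0$; hence each $x\in D$ is already separated from every other vertex of $G$ by its own zero coordinate. Consequently $D=V\setminus W$ is locating if and only if every one of the six pairs $u,v\in W$ is separated by some $x\in D$, which for diameter $2$ means some $x\in D$ adjacent to exactly one of $u,v$. Writing $\delta(u,v)$ for the set of vertices adjacent to exactly one of $u,v$ (a set which never contains $u$ or $v$), the requirement becomes $\delta(u,v)\not\subseteq W$ for each pair. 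The first payoff of this reformulation is that any pair with $|\delta(u,v)|\ge 3$ is automatically safe, since $W\setminus\{u,v\}$ has only two elements; thus only \emph{near-twin} pairs, those with $|\delta(u,v)|\le 2$, and in particular twins with $\delta(u,v)=\emptyset$, can obstruct a choice of $W$.

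Next I would record the structural consequences of the hypotheses. Because both $G$ and $\overline G$ are connected, $G$ has neither an isolated nor a universal vertex; moreover separation is identical in $G$ and in $\overline G$, since $x$ is adjacent to exactly one of $u,v$ in $G$ precisely when the same holds in $\overline G$, so twins and near-twins are the same objects in both graphs. I would then note that $G$ cannot be complete multipartite, because a complete multipartite graph has disconnected complement, contradicting double-connectedness. As the non-adjacent pairs of a complete multipartite graph are exactly its false-twin pairs, it follows that $G$ admits a non-adjacent, non-twin pair $\{a,b\}$, and dually so does $\overline G$. Such a pair satisfies $\delta(a,b)\ne\emptyset$, so $W=\{a,b\}$ already yields a locating set of size $n-2$; the real task is to enlarge $W$ to four vertices.

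To discard two further vertices I would select a second removable pair $\{c,d\}$ and verify the six separation conditions for $W=\{a,b,c,d\}$. The two pairs $\{a,b\}$ and $\{c,d\}$ are safe by construction, so the only possible failure is a cross pair, say $\{a,c\}$, whose whole separating set is swallowed, i.e. $\delta(a,c)\subseteq\{b,d\}$. Since $n\ge 6$ there are at least four candidates outside $\{a,b\}$ from which to pick $c,d$, and I would exploit this freedom, together with the expectation that the near-twin relation is very restrictive when \emph{both} $G$ and $\overline G$ have diameter $2$, to choose $\{c,d\}$ so that no cross pair has all its separators inside $W$. The main obstacle, and the step that genuinely requires case analysis, is exactly this: controlling the configurations of near-twin pairs so that four vertices can be deleted simultaneously. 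That the conclusion fails below $n=6$ is witnessed by $C_5$, which is self-complementary with $diam(C_5)=diam(\overline{C_5})=2$ yet has $\beta(C_5)=2=n-3>n-4$ and hence no locating set of size $n-4=1$; ruling out such rigid small configurations is precisely what the hypothesis $n\ge 6$ provides, and showing that no analogous rigidity can occur once $n\ge 6$ is the heart of the argument.
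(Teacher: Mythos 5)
Your reduction is sound, and it is in fact the same framework the paper's proof uses implicitly: since $diam(G)=2$, every vertex of $D=V\setminus W$ is distinguished by its own zero coordinate, so $D$ is locating if and only if each of the six pairs $u,v\in W$ has some vertex of $D$ adjacent to exactly one of $u,v$, i.e. $\delta(u,v)\not\subseteq W$. Your preliminary observations are also correct: pairs with $|\delta(u,v)|\ge 3$ are automatically safe, separation is invariant under complementation, and a doubly-connected graph cannot be complete multipartite, so it carries a non-adjacent non-twin pair $\{a,b\}$. But the proof stops exactly where it has to begin. Everything after ``I would select a second removable pair $\{c,d\}$'' is a statement of intent: you give no rule producing $c$ and $d$, no verification that the four cross pairs avoid $\delta\subseteq W$, and you yourself label the control of near-twin configurations ``the heart of the argument'' without supplying it. Worse, the expectation guiding that missing step is false as stated: twin pairs, the worst obstructions with $\delta(u,v)=\emptyset$, do occur for $n\ge 6$ under the hypotheses. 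For instance, blow up one vertex of $C_5$ into $K_2$: the resulting graph has order $6$, it and its complement (which is $C_5$ with a vertex blown up into $\overline K_2$) are both connected of diameter $2$, yet the two blown-up vertices are twins. So $n\ge 6$ does not eliminate the rigidity; a correct proof must actively route $W$ around twin and near-twin pairs, which is precisely the case analysis you defer.

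For comparison, the paper closes this gap constructively rather than by counting near-twins. Since a doubly-connected graph cannot be $P_4$-free (the complement of every nontrivial $P_4$-free graph is disconnected, by Corneil, Lerchs and Stewart Burlingham), it fixes an induced path $abcd$; $diam(G)=2$ then yields a common neighbor $e$ of $a$ and $d$, and a three-way case analysis on the adjacencies $eb$, $ec$ produces in each case a fourth vertex $f$ --- in the first case by a pigeonhole count on the induced $5$-cycle (this is where $n\ge 6$ enters), in the other two via $diam(\overline G)=2$ --- together with a fixed pair of retained vertices whose coordinate vectors on the four deleted vertices are $(1,1)$, $(1,2)$, $(2,1)$, $(2,2)$. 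Any repair of your proposal would need an analogous explicit mechanism for choosing all four vertices of $W$, not just the first two.
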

\begin{proof} 

Let $\rho$ be an induced path of order 4 in $G$, whose existence is guaranteed since, as was proved in \cite{colebu81}, the complement of every nontrivial $P_4$-free graph is not connected. 
Assume that $V(\rho)=\{a,b,c,d\}$ and $E(\rho)=\{ab,bc,cd\}$. Since, $diam(G)=2$, there exists a vertex $e\in V(G)$ such that $d_G(a,e)=d_G(e,d)=1$. %Consider the cycle $\Gamma$ determined by $\{a,b,c,d,e\}$. 
We distinguish three cases.

{\bf Case 1:} $eb, ec\not \in E(G)$ (see Figure\ref{casos}, left). In this case, the set $\{a,b,c,d,e\}$ determines an induced cycle $\Gamma$ in $G$ and also an induced cycle $\overline \Gamma$ in $\overline G$. Let $f$ a vertex not belonging to $\{a,b,c,d,e\}$. Either in $G$ or in $\overline G$, $f$ has at most two neighbors in $\{a,b,c,d,e\}$.
Without loss of generality we may suppose that $N_G(f)\cap \{a,b,c,d,e\}\le2$ (otherwise we interchange labels $G$ and $\overline G$), which means that  there exist in $\{a,b,c,d,e\}$ a pair of non-consecutive vertices non-adjacent to $f$. Again w.l.o.g. we assume that $N_G(f)\cap \{a,c\}=\emptyset$. Certainly, the set $V(G)\setminus\{b,d,e,f\}$ is a locating set of $G$ since $c_{\stackrel{}{\{a,c\}}}(b)=(1,1)$, $c_{\stackrel{}{\{a,c\}}}(d)=(2,1)$, $c_{\stackrel{}{\{a,c\}}}(e)=(1,2)$ and 
$c_{\stackrel{}{\{a,c\}}}(f)=(2,2)$.

{\bf Case 2:} $e$ is adjacent to exactly one vertex of $\{b,c\}$. Let us assume that $eb\in E(G)$ and $ec\not\in E(G)$ (see Figure\ref{casos}, center). In this case, $d_G(e,b)=1$, which means that $d_{\overline G}(e,b)=2$ since $diam(\overline G)=2$.
Therefore, there exists a vertex $f\not\in\{a,b,c,d,e\}$ such that $d_{\overline G}(e,f)=d_{\overline G}(f,b)=1$. 
This means that  $d_{G}(e,f)=d_{G}(f,b)=2$ as $diam(\overline G)=2$. Hence, the set $V(G)\setminus\{a,c,d,f\}$ is a locating set of $G$ since $c_{\stackrel{}{\{b,e\}}}(a)=(1,1)$, $c_{\stackrel{}{\{b,e\}}}(c)=(1,2)$, $c_{\stackrel{}{\{b,e\}}}(d)=(2,1)$ and 
$c_{\stackrel{}{\{b,e\}}}(f)=(2,2)$.

{\bf Case 3:} $eb, ec \in E(G)$ (see Figure\ref{casos}, right). Since $d_G(b,c)=1$, we have $d_{\overline G}(b,c)=2$. Therefore, there exists a vertex $f\not\in\{a,b,c,d,e\}$ such that $d_{\overline G}(b,f)=d_{\overline G}(f,c)=1$.
This means that  $d_{G}(b,f)=d_{G}(f,c)=2$. Hence, the set $V(G)\setminus\{a,d,e,f\}$ is a locating set of $G$ since $c_{\stackrel{}{\{b,c\}}}(a)=(1,2)$, $c_{\stackrel{}{\{b,c\}}}(d)=(2,1)$, $c_{\stackrel{}{\{b,c\}}}(e)=(1,1)$ and 
$c_{\stackrel{}{\{b,c\}}}(f)=(2,2)$.
\end{proof}

Take a connected graph  $G$ of order $n$, and assume that $V(G)=\{1,\ldots,n\}$. Let $G[H^{(i)}]$ denote the graph obtained from $G$ by replacing vertex $i$ by a given graph $H$ and joining every vertex of $H$ to every  neighbor of vertex $i$ in $G$. Similarly, $G[H_1^{(i)},H_2^{(j)}]$ denotes the graph obtained from $G$ by replacing  vertex $i$ by a graph $H_1$ and vertex $j$ by a graph $H_2$ and joining every vertex of $H_1$ (resp. vertex of $H_2$) to every  neighbor of vertex $i$ (resp. $j$) in $G$ and, just if $ij\in E(G)$,  also every 
vertex of $H_1$ to every vertex  of $H_2$. Finally, $B$ denotes the bull graph shown in Figure \ref{hbdf}.

\begin{theorem}\label{beta2} For any doubly-connected graph $G$ with $n\geq4$,  $2\le \beta(G)+\beta(\overline G)\le 2n-6$. Moreover,
\begin{itemize}
\item $\beta(G)+\beta(\overline G)=2$ if and only if $G=P_4$.
\item $\beta(G)+\beta(\overline G)=2n-6$ if and only if 
$G\in\Omega_1\cup\Omega_2\cup\Omega_3$, where
\begin{itemize}
\item $\Omega_1=\{P_4, C_5, B\}$
\item $\Omega_2=\{P_4[K_{n-3}^{(1)}], P_4[\overline K_{n-3}^{(1)}], P_4[K_{n-3}^{(2)}], P_4[\overline K_{n-3}^{(2)}]\}$
\item $\Omega_3=\{P_4[K_{r}^{(1)},K_{n-r-2}^{(2)}]:1\le r \le n-3\} \cup \{P_4[\overline K_{r}^{(1)},\overline K_{n-r-2}^{(3)}]:1\le r \le n-3\}$
\end{itemize}
\end{itemize}
\end{theorem}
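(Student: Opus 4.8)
The plan is to handle the two bounds separately and then treat the two equality cases, reusing as much of Theorem~\ref{beta1} as possible.

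For the lower bound nothing really changes from Theorem~\ref{beta1}: since $\beta(H)\ge 1$ for every graph, $\beta(G)+\beta(\overline G)\ge 2$, with equality exactly when $\beta(G)=\beta(\overline G)=1$. As paths are the only graphs of location number $1$ \cite{cherjooe00} and $P_4$ is the only path whose complement is again a path, equality forces $G=P_4$; conversely $P_4=\overline{P_4}$ is doubly-connected, so it is admissible in the present setting. This settles the first bullet.

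For the upper bound I would invoke the classification of the extremal graphs for the location number \cite{cherjooe00}: the only graph with $\beta=n-1$ is $K_n$, and the graphs with $\beta=n-2$ are precisely $K_{r,s}$, the join $K_r\vee\overline{K_s}$, and the join $K_r\vee(K_1\cup K_s)$. A direct computation of complements shows that each of these has a \emph{disconnected} complement. Hence no doubly-connected graph can have location number $n-1$ or $n-2$, so $\beta(G)\le n-3$. Since $\overline G$ is doubly-connected whenever $G$ is, the same reasoning gives $\beta(\overline G)\le n-3$, and adding the two inequalities yields $\beta(G)+\beta(\overline G)\le 2n-6$. In particular, equality in the upper bound is equivalent to $\beta(G)=\beta(\overline G)=n-3$, which is the condition I would analyse next.

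The bulk of the work is the second bullet. First I would reduce to $\beta(G)=\beta(\overline G)=n-3$ as above. For $n\ge 6$, Lemma~\ref{n-3D2} then rules out $diam(G)=diam(\overline G)=2$, since otherwise $\beta(G)\le n-4<n-3$; meanwhile the standard bound $\beta(H)\le n-diam(H)$ forces $diam(G)\le 3$ and $diam(\overline G)\le 3$, so the diameters become essentially determined. Next I would run a twin-class analysis: true-twin classes induce cliques and false-twin classes induce independent sets, and a class of size $k$ forces $k-1$ of its vertices into every locating set. Imposing $\beta(G)=n-3$ bounds both the quotient graph and the class sizes severely, and combined with the induced $P_4$ guaranteed by double-connectivity \cite{colebu81} this should force $G$ to be $P_4$ with one vertex blown up into a clique or an independent set (the family $\Omega_2$) or with two vertices blown up (the family $\Omega_3$). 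Requiring \emph{simultaneously} that $\beta(\overline G)=n-3$ is what trims the list to exactly $\Omega_2\cup\Omega_3$. The small orders $n=4$ and $n=5$, where Lemma~\ref{n-3D2} does not apply, I would dispose of by direct inspection, producing $\Omega_1=\{P_4,C_5,B\}$. Finally, the forward direction, namely checking that every member of $\Omega_1\cup\Omega_2\cup\Omega_3$ really satisfies $\beta(G)=\beta(\overline G)=n-3$, is a routine computation driven by the forced twins.

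The main obstacle I anticipate is the structural ``only if'' part: showing that the joint requirement $\beta(G)=\beta(\overline G)=n-3$, under double-connectivity, admits no graphs beyond the blow-ups of $P_4$. Controlling how the twin classes of $G$ and of $\overline G$ interact, so that both quotient graphs collapse onto the single skeleton $P_4$, is the delicate point, and it is there that simultaneously tracking the diameters and the induced $P_4$ on both sides will be essential.
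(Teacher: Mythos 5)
Your skeleton coincides with the paper's proof almost step for step: the lower bound via Theorem~\ref{beta1}, the upper bound via the characterization in \cite{cherjooe00} of connected graphs with $\beta\ge n-2$ (all of which have disconnected complements), the reduction of the equality case to $\beta(G)=\beta(\overline G)=n-3$ with $2\le diam\le 3$ on both sides, the use of Lemma~\ref{n-3D2} to kill the case $diam(G)=diam(\overline G)=2$ for $n\ge 6$, and direct inspection at small orders. The one place you depart is the decisive structural step, and there your argument has a genuine gap: for $n\ge 6$ you assert that a twin-class analysis ``should force'' any diameter-3 graph with $\beta(G)=n-3$ to be a blow-up of $P_4$, but no proof is given, and this step is not routine --- it is precisely the content of the characterization, in \cite{ours2}, of all graphs of order $n$, diameter $D$ and location number $n-D$, which the paper simply invokes. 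That citation also yields, essentially for free, that every diameter-3 graph with $\beta=n-3$ is doubly-connected with $diam(\overline G)=3$, which disposes of the mixed case $diam(G)=3$, $diam(\overline G)=2$; your sketch glosses this case with ``the diameters become essentially determined'' without an argument.

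Note moreover that the twin machinery you propose points the wrong way on its own: a twin class of size $k$ forces $k-1$ of its vertices into every locating set, so $\beta(G)=n-3$ only gives a \emph{lower} bound (at least $3$) on the number of twin classes; it bounds neither the class sizes nor the quotient graph. To force the blow-up structure you need the converse kind of argument --- whenever the twin quotient is anything richer than $P_4$, exhibit a locating set of size at most $n-4$ --- and that is exactly the hard part you have deferred. The bull graph $B$ is a warning here: it is twin-free, has diameter $3$ and $\beta(B)=n-3=2$, so sporadic extremal graphs outside the blow-up families do exist, and excluding them for $n\ge 6$ requires real work, not just bookkeeping of twin classes. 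As written, then, your proposal is an accurate reconstruction of the paper's architecture with its central lemma replaced by a promissory note; to complete it you must either carry out the twin-quotient case analysis in full (in effect re-proving the $D=3$ case of \cite{ours2}) or cite that characterization and then verify $\beta(\overline G)=n-3$ graph by graph within its list, as the paper does.
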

\begin{proof}
In \cite{cherjooe00}, it was proved that a connected graph $G$ satisfies $n-2\le \beta(G) \le n-1$ if and only if, for some  $1\le h\le n-1$, $G\in\{K_n, K_{h,n-h},K_h+\overline K_{n-h}, K_h+(K_1\cup K_{n-h-1}\}$. 
It is a routine exercise to check that the complement  of  any of these graphs is not connected. Hence,  every doubly-connected graph $G$ of order $n\geq4$ satisfies $1\le\beta(G)\le n-3$, i.e., $2\le \beta(G)+\beta(\overline G)\le 2n-6$. Moreover, according to Theorem \ref{beta1}, the lower bound 2 is attained only for $G=P_4$, since $\overline P_4=P_4$. 

Let  $G$ be a doubly-connected graph of order $n\ge4$  verifying $\beta(G)+\beta(\overline G)=2n-6$, i.e., such that $\beta(G)=\beta(\overline G)=n-3$. In \cite{cherjooe00}, it was proved that the order of a graph $G$ of diameter $D$ and location number $\beta$ is at least $\beta+D$. This means, that if $\beta(G)=n-3$, then $2\le D\le 3$, since $\beta(K_n)=n-1$. In \cite{ours2}, the set of graphs with $n$ vertices, diameter $D$ and location number $n-D$ were characterized for all feasible values of $n$ and $D$. In particular, we have the set of graphs with $n\ge4$ vertices, diameter $diam(G)=D=3$ and location number $n-3$,  all of them being doubly-connected and verifying $diam(\overline G)=3$. Among them, we are just interested in those graphs $G$ for which $\beta(\overline G)=n-3$. It is a routine exercise to check that as well as the path $P_4$ and the bull graph $B$, the only doubly-connected graphs of diameter 3 satisfying $\beta( G)=\beta(\overline G)=n-3$ are those belonging to $\Omega_3 \cup \Omega_3$. Hence, according to Lemma \ref{n-3D2}, to finalize the proof it suffices to check  that the only doubly-connected graph of order $4\le n \le 5$ having both itself and its complement diameter 2 is the cycle $C_5$.
\end{proof}

\begin{figure}[ht]
  \begin{center}
        \includegraphics[width=.6\textwidth]{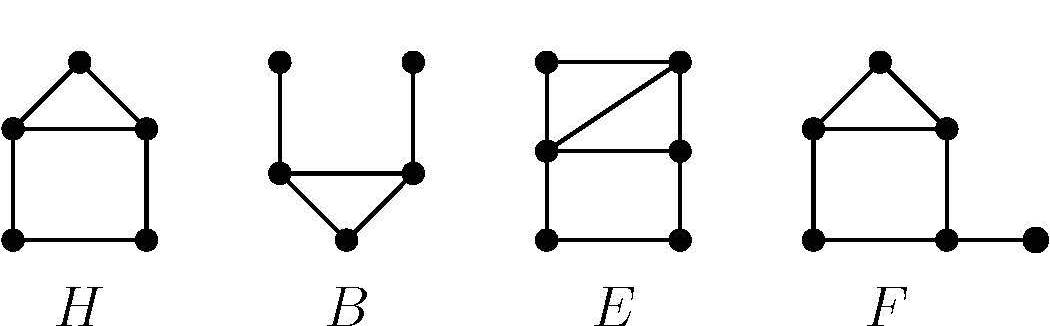}
  \end{center}
  \caption{House graph $H=\overline P_5$, bull graph $B=\overline B$, graph $E$ and graph $F=\overline E$...}
  \label{hbdf}
\end{figure}

%\newpage
%%%%%%%%%%%%%%%%%%%%%%%%%%%%%%%%%%%%%%%%%%%%%%%%%%%%%%%%%%%%%%%%%%%%%%
\subsection{Metric-location-domination number}

\begin{theorem}\label{eta1} For every  nontrivial graph $G$, $3\le \eta(G)+\eta(\overline G)\le 2n-1$. Moreover,
\begin{itemize}
\item $\eta(G)+\eta(\overline G)=3$ if and only if $\{G,\, \overline G\}=\{K_2,\, \overline K_2\}$.
\item $\eta(G)+\eta(\overline G)=2n-1$ if and only if $\{G,\, \overline G\}=\{K_n,\, \overline K_n\}$.
\end{itemize}
\end{theorem}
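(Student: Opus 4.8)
The plan is to treat the two inequalities separately; in each case the work reduces to describing the graphs on which $\eta$ attains an extreme value on a single graph, after which the Nordhaus--Gaddum statement follows by combining this with the elementary fact that a graph and its complement cannot both be extremal.

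For the lower bound I would start from $\eta(G)\ge\gamma(G)\ge1$, which holds for every nontrivial graph. The decisive step is to characterize when $\eta(G)=1$: a single vertex $x$ forming a dominating set forces every other vertex to lie at distance $1$ from $x$, so whenever $n\ge3$ at least two vertices share the coordinate vector $(1)$ and $\{x\}$ cannot locate. Hence $\eta(G)=1$ forces $n=2$, and among the two graphs on two vertices only $K_2$ works, so $\eta(G)=1$ if and only if $G=K_2$. The value $2$ is therefore impossible, since $\eta(G)=\eta(\overline G)=1$ would require $G=\overline G=K_2$, contradicting $\overline{K_2}\ne K_2$; this gives $\eta(G)+\eta(\overline G)\ge3$. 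For the equality, one summand must be $1$, say $\eta(G)=1$, whence $G=K_2$ and $\overline G=\overline{K_2}$ with $\eta(\overline{K_2})=2$; conversely $\{K_2,\overline K_2\}$ realizes the sum $3$.

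For the upper bound the key observation is that, for any vertex $w$, the set $V\setminus\{w\}$ is automatically a locating set: only one vertex lies outside it, and every vertex of $V\setminus\{w\}$ is distinguished by the $0$ in its own coordinate. Thus $V\setminus\{w\}$ is an MLD-set as soon as it dominates, i.e. as soon as $w$ is not isolated, which yields $\eta(G)\le n-1$ for every graph possessing at least one edge. Since domination forces $\gamma(\overline K_n)=n$ and hence $\eta(\overline K_n)=n$, this shows that $\eta(G)=n$ holds precisely for $G=\overline K_n$; I would also note $\eta(K_n)=n-1$, as $\beta(K_n)=n-1$ already forces $\eta(K_n)\ge n-1$ while $V\setminus\{w\}$ dominates. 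Combining the cases: if $\{G,\overline G\}=\{K_n,\overline K_n\}$ the sum equals $n+(n-1)=2n-1$, and otherwise neither graph is $\overline K_n$, both terms are at most $n-1$, and the sum is at most $2n-2$. This proves $\eta(G)+\eta(\overline G)\le2n-1$ together with the characterization of equality.

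The only substantive steps are the two extremal characterizations $\eta(G)=1\iff G=K_2$ and $\eta(G)=n\iff G=\overline K_n$; everything else is arithmetic plus the remark that $G$ and $\overline G$ cannot both be $K_2$, nor both be $\overline K_n$. The point to watch is the disconnected case: for $\overline K_n$ one should reason through $\gamma$ rather than through distances, since $\eta(G)\ge\gamma(G)$ renders the locating condition irrelevant there, sidestepping any issue with infinite distances.
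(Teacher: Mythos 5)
Your proposal is correct and takes essentially the same route as the paper: both proofs reduce the Nordhaus--Gaddum bounds to the single-graph extremal facts $\eta(G)=1$ if and only if $G=K_2$, $\eta(G)=n$ if and only if $G=\overline K_n$, and $\eta(K_n)=n-1$, and then combine them. The only difference is that you supply elementary arguments for these facts (the distance-vector collision for $\eta=1$, the set $V\setminus\{w\}$ for the upper bound, and domination in $\overline K_n$), whereas the paper asserts them and refers back to its proof of the corresponding theorem for $\beta$.
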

\begin{proof}
The only nontrivial graph $G$ such that $\eta(G)=1$ is $G=K_2$, which means that for every graph $G$, $3\le \eta(G)+\eta(\overline G)$. Moreover, the equality $\eta(G)+\eta(\overline G)=3$ is only true when either $G$ or $\overline G$ is  $K_2$, since $\eta(\overline K_2)=2$. The rest of the proof is similar to that of Theorem \ref{beta1}.
\end{proof}

Given two positive integers $r,s$, let $K_2(r,s)$ denote the so-called double star, obtained after joining the central vertices of the stars $K_{1,r}$ and $K_{1,s}$. If $2\le s\le r-1$, let $K^s_{1,r}$ represent the graph obtained by adding a new vertex adjacent to $s$ leaves of the star $K_{1,r}$. Finally, $\overline K_2(r,s)$, $\overline K^s_{1,r}$ denote the complements of $K_2(r,s)$, $K^s_{1,r}$,respectively, and graphs $B$, $H$, $E$ and $F$ are shown in Figure \ref{hbdf}.

\begin{theorem}\label{eta2} For any doubly-connected graph $G$ with $n\geq5$,  $4\le \eta(G)+\eta(\overline G)\le 2n-5$. Moreover,
\begin{itemize}
\item $\eta(G)+\eta(\overline G)=4$ if and only if $G\in\{P_5,C_5,B,H,E,F\}$.
\item $\eta(G)+\eta(\overline G)=2n-5$ if and only if $G\in\{K_2(r,s),\overline K_2(r,s),K^s_{1,r},\overline K^s_{1,r}\}$.
\end{itemize}
\end{theorem}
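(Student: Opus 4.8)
The plan is to treat the two bounds separately, in each case first establishing the numerical inequality and then identifying the extremal graphs.

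For the lower bound, recall that the only nontrivial graph with $\eta=1$ is $K_2$, whose complement is disconnected; hence every doubly-connected graph satisfies $\eta(G)\ge 2$ and $\eta(\overline G)\ge 2$, giving $\eta(G)+\eta(\overline G)\ge 4$ at once. Equality forces $\eta(G)=\eta(\overline G)=2$, so the task reduces to describing the doubly-connected graphs for which both $G$ and $\overline G$ admit a two-vertex metric-locating-dominating set. The first step here is to bound the order: if $\{x,y\}$ dominates and locates, then every other vertex is adjacent to $x$ or to $y$, at most one vertex can realize the coordinate vector $(1,1)$, and domination forces the diameter to be bounded, so the number of admissible distance vectors, and hence $n$, is bounded by a constant. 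A finite case check over the resulting small orders, discarding the graphs that are not doubly-connected or whose complement fails $\eta=2$, should leave exactly the six order-$5$ graphs $P_5,C_5,B,H,E,F$, which split into the complementary pairs $\{P_5,H\}$, $\{E,F\}$ and the self-complementary $C_5$ and $B$.

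For the upper bound, I would first prove the uniform estimate $\eta(G)\le n-2$ for every doubly-connected graph. Since the complement of a $P_4$-free graph is disconnected \cite{colebu81}, $G$ contains an induced path with vertices $a,b,c,d$ and edges $ab,bc,cd$; taking $D=V\setminus\{a,d\}$ works, because $a$ and $d$ are non-adjacent and differ on $b$, so their coordinate vectors with respect to $D$ differ (giving location), while $b\in N(a)\cap D$ and $c\in N(d)\cap D$ give domination. Applying this to both $G$ and $\overline G$ yields $\eta(G)+\eta(\overline G)\le 2n-4$, so it remains to rule out the simultaneous equalities $\eta(G)=\eta(\overline G)=n-2$; that is, to show that $\eta(G)=n-2$ forces $\eta(\overline G)\le n-3$.

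The crux is therefore to characterize the doubly-connected graphs with $\eta(G)=n-2$. This condition means that no $(n-3)$-set is metric-locating-dominating, a rigid requirement that should force a large collection of mutually twin vertices (true or false), since each nontrivial twin class of size $k$ contributes $k-1$ forced elements to every locating set. Analyzing this twin structure together with the domination constraint should single out the double stars $K_2(r,s)$ and the graphs $K^s_{1,r}$ (and their complements) as the only possibilities, and a direct computation, of the kind showing $\eta(K_2(r,s))=n-2$ while $\eta(\overline{K_2(r,s)})=n-3$, then both verifies $\eta(G)+\eta(\overline G)=2n-5$ on these families and confirms that the two extremal values cannot both equal $n-2$. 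I expect this characterization to be the main obstacle: bounding $\eta$ is easy, but pinning down precisely the four families requires a careful, twin-class-driven structural argument and is the technical heart of the theorem.
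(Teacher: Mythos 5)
Your scaffolding matches the paper's proof: lower bound from $\eta\ge 2$ on doubly-connected graphs plus a classification of the graphs with $\eta(G)=\eta(\overline G)=2$; upper bound from $\eta(G)\le n-2$ plus a characterization of the doubly-connected graphs with $\eta(G)=n-2$. Two of your ingredients are genuinely sound and even more self-contained than the paper's: the derivation of $\eta(G)\le n-2$ by taking an induced path $a,b,c,d$ (guaranteed by \cite{colebu81} since $\overline G$ is connected) and checking that $V\setminus\{a,d\}$ is an MLD-set is correct and replaces the paper's appeal to the result of \cite{heoe} that $\eta(G)=n-1$ forces $K_n$ or $K_{1,n-1}$; likewise your order-bounding argument for a $2$-element MLD-set $\{x,y\}$ is valid (domination forces $d(x,y)\le 3$, so at most six coordinate vectors are realizable and $n\le 8$, consistent with the $51$-graph list of \cite{cahemopepu12}), and the complementary pairing $H=\overline{P_5}$, $F=\overline E$, $C_5=\overline{C_5}$, $B=\overline B$ is as in the paper.

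The gap is that both classification steps --- which are the entire technical content of the theorem --- are asserted with ``should'' rather than proved. For the lower bound, the finite case check over orders up to $8$ is never carried out; the paper disposes of it by invoking the explicit list of the $51$ connected graphs with $\eta=2$ from \cite{cahemopepu12} and inspecting complements. More seriously, for the upper bound your characterization of the doubly-connected graphs with $\eta(G)=n-2$ rests on a twin heuristic whose premise fails as stated: $P_4$ is twin-free yet satisfies $\eta(P_4)=n-2$, so ``$\eta(G)=n-2$ forces many twins'' requires a real argument, and since $\eta$ (unlike $\lambda$) involves distances, twin counting alone cannot isolate the families. The paper imports the full characterization of connected graphs with $\eta(G)=n-2$ from \cite{heoe}, then checks that among these only $K_2(r,s)$ and $K^s_{1,r}$ have connected complements and that $\eta(\overline{K_2(r,s)})=\eta(\overline{K^s_{1,r}})=n-3$; you must either reproduce that classification or cite it, since without it both extremal lists in the statement are unproven. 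One small correction of phrasing: among doubly-connected graphs, $\eta(G)=n-2$ holds exactly for $G\in\{K_2(r,s),K^s_{1,r}\}$, not also for their complements (those have $\eta=n-3$); the complements enter the extremal list only because the sum $\eta(G)+\eta(\overline G)$ is symmetric in $G$ and $\overline G$ --- your final sentence has the logic right, but the parenthetical ``(and their complements) as the only possibilities'' should be removed from the $\eta=n-2$ claim itself.
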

\begin{proof}
Every  doubly-connected graph $G$ of order at least 5 satisfies $2\le\eta(G)$, since the unique nontrivial graph such that $\eta(G)=1$ is $G=P_2$. In other words, for every nontrivial doubly-connected graph $G$, $4\le \eta(G)+\eta(\overline G)$. In \cite{cahemopepu12}, it was proved that there are exactly 51 connected graphs satisfying $\eta(G)=2$, any of them having an order between 3 and 8. It is a routine exercise to check that the only doubly-connected graphs $G$ with order at least 5 of this family whose complement verify also  $\eta(\overline G)=2$ are exactly the graphs belonging to the set $\{P_5,C_5,B,H,E,F\}$.

In \cite{heoe}, it was proved that if $G$ is a connected graph such that  $\eta(G)=n-1$, then $G$ is either the complete graph $K_n$ or the star $K_{1,n-1}$. Hence, every doubly-connected graph $G$ of order $n\geq4$ satisfies $\eta(G)\le n-2$, since  both $\overline K_n$ and $\overline K_{1,n-1}$ are not connected. Also in \cite{heoe}, all connected graphs $G$ for which $\eta(G)=n-2$ were completely characterized. It is a routine exercise to check that the complement of any graph $G$ verifying $\eta(G)=n-2$ is not connected unless $G$ is  either a double star $K_2(r,s)$ or a graph $K^s_{1,r}$. As $\eta(\overline K_2(r,s))=\eta(\overline K^s_{1,r})=n-3$, we conclude first, that every doubly-connected graph $G$ of order $n\geq5$ satisfies $\eta(G)+\eta(\overline G)\le 2n-5$ and second, that these four families are the only ones attaining this upper bound.

\end{proof}

%\newpage
%%%%%%%%%%%%%%%%%%%%%%%%%%%%%%%%%%%%%%%%%%%%%%%%%%%%%%%%%%%%%%%%%%%%%%
\subsection{Location-domination number}

%\begin{table}[t]
%\bc\scalebox{0.9}{
%  \begin{tabular}{c|c|cc|cc|c|cc|c|c|cc|} %\hline \vspace{-.08cm}
% $G$    & $K_1=\overline K_1$  &  $K_2$   & $\overline K_2$ &   $P_3$  & $\overline P_3$ & $P_4=\overline P_4$     & $P_5$ & $H=\overline P_5$ & $C_5=\overline C_5$   &   $B=\overline B$  &  $E$  & $F=\overline E$  \\ \hline%\hline
% \vspace{-.2cm}$\beta$  &   1    &    1     &    2            &      1   &    2            &    1     &   1    &    2             &    2    &    2   &   2   &    2             \\  %\hline
% \vspace{-.2cm}$\eta$    &   1    &    1     &    2            &      2   &    2            &    2      &   2    &    2             &    2    &    2   &   2   &    2             \\  %\hline
% $\lambda$ &   1    &    1     &    2            &      2   &    2            &    2      &   2    &    2             &    2    &    2   &   3   &    3             \\  %\hline
%\end{tabular}}\ec
%\caption{Parameters $\beta$, $\eta$ and $\lambda$ of some small graphs and their complements.}
%\label{tablatot}   
%\end{table}

\begin{theorem}\label{lambda2} For every  nontrivial graph $G$, $3\le \lambda(G)+\lambda(\overline G)\le 2n-1$. Moreover,
\begin{itemize}
\item $\lambda(G)+\lambda(\overline G)=3$ if and only if $\{G,\, \overline G\}=\{K_2,\, \overline K_2\}$.
\item $\lambda(G)+\lambda(\overline G)=2n-1$ if and only if $\{G,\, \overline G\}=\{K_n,\, \overline K_n\}$.
\end{itemize}
\end{theorem}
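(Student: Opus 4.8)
The plan is to follow the pattern of Theorems~\ref{beta1} and~\ref{eta1}, since the parameter $\lambda$ shares with $\beta$ and $\eta$ the two extremal features that drive those proofs. Everything reduces to two elementary facts: among nontrivial graphs $\lambda(G)=1$ holds only for $G=K_2$, and $\lambda(G)\le n$ always, with $\lambda(G)=n$ exactly when $G=\overline K_n$.

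For the lower bound I would first record that $\lambda(G)\ge1$ for every graph and that, if $D=\{x\}$ were a locating-dominating set, then domination forces every other vertex to be adjacent to $x$, so all of them share the trace $\{x\}$ on $D$; the locating condition then permits at most one vertex outside $D$, whence $n\le2$ and $G=K_2$. Since $\overline K_2$ is edgeless, its only dominating set is $V$, giving $\lambda(\overline K_2)=2$. Consequently $\lambda(G)+\lambda(\overline G)\ge2$, and the value $2$ is impossible because it would require $G=\overline G=K_2$; hence the sum is at least $3$. Equality $\lambda(G)+\lambda(\overline G)=3$ can only arise as $1+2$, so one of $G,\overline G$ has $\lambda=1$ and therefore equals $K_2$, forcing $\{G,\overline G\}=\{K_2,\overline K_2\}$.

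For the upper bound the key observation is that if $G$ has a non-isolated vertex $v$, then $V\setminus\{v\}$ is locating-dominating, as it dominates $v$ and the locating condition is vacuous for a single excluded vertex; thus $\lambda(G)\le n-1$. Contrapositively, $\lambda(G)=n$ forces all vertices to be isolated, i.e. $G=\overline K_n$, and conversely $\lambda(\overline K_n)=n$. As $\overline K_n\ne K_n$ for $n\ge2$, the graphs $G$ and $\overline G$ cannot both be $\overline K_n$, so at most one of $\lambda(G),\lambda(\overline G)$ equals $n$ while the other is at most $n-1$, giving $\lambda(G)+\lambda(\overline G)\le 2n-1$. Using $\lambda(K_n)=n-1$ (in $K_n$ every excluded vertex has trace $D$, so at most one may lie outside an LD-set), equality in the upper bound forces one summand to equal $n$; that graph is then $\overline K_n$ and $\{G,\overline G\}=\{K_n,\overline K_n\}$, which indeed realizes the bound since $(n-1)+n=2n-1$.

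The only work beyond Theorem~\ref{beta1} is establishing these two extremal characterizations of $\lambda$, both of which are short, so I do not expect a real obstacle. The single point deserving care is that $\lambda(\overline K_n)=n$ rather than $n-1$: unlike $\beta$, the location-domination number can attain the full order $n$, and it is precisely this that keeps the upper bound at $2n-1$.
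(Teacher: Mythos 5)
Your proposal is correct and follows essentially the same route as the paper, whose proof is just the remark that it mirrors Theorems~\ref{eta1} and~\ref{beta1}: the lower bound rests on $\lambda(G)=1$ holding only for $K_2$ together with $\lambda(\overline K_2)=2$, and the upper bound on $\overline K_n$ being the unique graph with $\lambda=n$ together with $\lambda(K_n)=n-1$. The only difference is that you supply explicit short proofs of these extremal facts, which the paper leaves implicit.
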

\begin{proof}
It  is similar to that of Theorem \ref{eta1}.
\end{proof}

\begin{theorem}\label{lambda3} For any doubly-connected graph $G$ with $n\geq5$,  $4\le \lambda(G)+\lambda(\overline G)\le 2n-5$. Moreover,
\begin{itemize}
\item $\lambda(G)+\lambda(\overline G)=4$ if and only if $G\in\{P_5,C_5,B,H\}$.
\item $\lambda(G)+\lambda(\overline G)=2n-5$ if and only if $G\in\{K_2(r,s),\overline K_2(r,s),K^s_{1,r},\overline K^s_{1,r}\}$.
\end{itemize}
\end{theorem}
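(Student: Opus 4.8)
The plan is to follow the template of the proof of Theorem~\ref{eta2}, using systematically the inequality $\eta\le\lambda$ recorded in the Introduction together with the extremal characterizations of the location-domination number.

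For the lower bound, since every locating-dominating set is in particular metric-locating-dominating, $\eta(G)\le\lambda(G)$, so $\lambda(G)+\lambda(\overline G)\ge\eta(G)+\eta(\overline G)\ge4$ by Theorem~\ref{eta2}. For the equality case, $\lambda(G)+\lambda(\overline G)=4$ together with $\lambda\ge2$ forces $\lambda(G)=\lambda(\overline G)=2$; and as $2=\lambda(G)\ge\eta(G)\ge2$ (and likewise for $\overline G$) we get $\eta(G)=\eta(\overline G)=2$. Hence, by Theorem~\ref{eta2}, $G$ must lie in the six-element set $\{P_5,C_5,B,H,E,F\}$. It then suffices to evaluate $\lambda$ on this finite list: I would display size-$2$ locating-dominating sets for $P_5$, $C_5$, $B$ and $H$ (for instance $\{2,4\}$ in the path $1\text{-}2\text{-}3\text{-}4\text{-}5$), using $H=\overline{P_5}$ and the self-complementarity of $C_5$ and $B$ to avoid duplicate work, and then verify that one of $E,F$ admits no locating-dominating set of size $2$, i.e.\ $\lambda\ge3$ there. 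Since $F=\overline E$, this single computation discards both $E$ and $F$ and leaves exactly $\{P_5,C_5,B,H\}$; this exclusion is precisely the point where $\lambda$ departs from $\eta$.

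For the upper bound I would first invoke the characterization $\lambda(G)=n-1$ if and only if $G\in\{K_n,K_{1,n-1}\}$; as $\overline{K_n}$ and $\overline{K_{1,n-1}}$ are disconnected, every doubly-connected graph satisfies $\lambda(G)\le n-2$, and likewise $\lambda(\overline G)\le n-2$. To sharpen the resulting estimate $2n-4$ to $2n-5$, I would appeal to the classification of the graphs with $\lambda(G)=n-2$ and check, exactly as in Theorem~\ref{eta2}, that the complement of such a graph is disconnected unless $G$ is a double star $K_2(r,s)$ or a graph $K^s_{1,r}$ (or one of their complements). For these four families a direct computation gives $\lambda(\overline G)=n-3$ (the lower estimate $\lambda(\overline G)\ge\eta(\overline G)=n-3$ already follows from the proof of Theorem~\ref{eta2}, so only a locating-dominating set of size $n-3$ need be exhibited). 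Thus $\lambda(G)=n-2$ forces $\lambda(\overline G)=n-3$ for doubly-connected $G$, while $\lambda(G)\le n-3$ gives $\lambda(G)+\lambda(\overline G)\le2n-5$ trivially; equality holds exactly for the four listed families.

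The step I expect to be the genuine obstacle is the upper bound. The inequality $\eta\le\lambda$ points the wrong way, so the bound cannot be inherited from Theorem~\ref{eta2} and must rest on the independent classification of the graphs with $\lambda=n-2$, whose doubly-connected members with connected complement have to be sifted out by hand. The accompanying squeeze $\eta(G)\le\lambda(G)\le n-2$ does pin down $\lambda(K_2(r,s))=\lambda(K^s_{1,r})=n-2$ for free, so the real content reduces to computing $\lambda$ on the two complementary families and confirming it equals $n-3$ rather than $n-2$: were it $n-2$ for any of them, the bound $2n-5$ would fail.
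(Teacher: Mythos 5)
Your argument is correct, but it reaches the two equality characterizations by a genuinely different route from the paper's, so a comparison is worthwhile. For the lower bound, the paper does not pass through Theorem~\ref{eta2} at all: it invokes the classification from \cite{cahemopepu12} of the sixteen connected graphs with $\lambda(G)=2$ (all of order between $3$ and $5$) and sifts out by inspection the doubly-connected ones whose complement also satisfies $\lambda(\overline G)=2$, obtaining $\{P_5,C_5,B,H\}$ directly. Your reduction via $\eta\le\lambda$ to the six-graph list of Theorem~\ref{eta2}, followed by finite evaluations of $\lambda$ on that list, is a legitimate and arguably more self-contained alternative: it replaces an external classification by a result already proved in the paper, and your observation that the single verification $\lambda(E)\ge3$ eliminates both $E$ and $F$ (since then $\lambda(F)+\lambda(\overline F)\ge 2+3>4$) is sound. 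One small caution there: $H=\overline{P_5}$ does not let you infer $\lambda(H)=2$ from $\lambda(P_5)=2$, since Theorem~\ref{parecidos} only yields $\lambda(H)\le 3$; the explicit size-$2$ LD-set for $H$ that you promise really is needed (e.g.\ $\{3,4\}$ works in the complement of the path on vertices $1,2,3,4,5$ taken consecutively), whereas the deduplication is genuine only for the self-complementary graphs $C_5$ and $B$. For the upper bound, the step you single out as ``the genuine obstacle'' --- computing $\lambda(\overline K_2(r,s))=\lambda(\overline K^s_{1,r})=n-3$ by exhibiting LD-sets of size $n-3$ --- is precisely what the paper avoids: it quotes from \cite{cahemopepu12} the equivalence that $\lambda(G)=n-2$ if and only if $\eta(G)=n-2$, which (i) identifies the doubly-connected graphs with $\lambda=n-2$ with those already sifted in the proof of Theorem~\ref{eta2}, and (ii) combined with $\eta(\overline G)=n-3\ne n-2$ forbids $\lambda(\overline G)=n-2$, so the squeeze $n-3=\eta(\overline G)\le\lambda(\overline G)\le n-2$ forces $\lambda(\overline G)=n-3$ with no construction whatsoever. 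Your plan is complete as written, but that equivalence would let you delete its most laborious step.
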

\begin{proof}
Every  doubly-connected graph $G$ of order at least 5 satisfies $2\le\lambda(G)$, since the unique nontrivial graph such that $\lambda(G)=1$ is $G=P_2$. In other words, for every nontrivial doubly-connected graph $G$, $4\le \lambda(G)+\lambda(\overline G)$. In \cite{cahemopepu12}, it was proved that there are exactly 16 connected graphs satisfying $\lambda(G)=2$, any of them having an order between 3 and 5. It is a routine exercise to check that the only doubly-connected graphs $G$ of this family whose complement verify also $\lambda(\overline G)=2$ are the 5-path $P_5$, the 5-cycle $C_5$, the bull graph $B$ and the house graph  $H$ (see Figure \ref{hbdf}).
The rest of the proof is similar to that of Theorem \ref{eta2} since for every graph $G$, if $\lambda(G)=n-1$, then $G$ is either the complete graph $K_n$ or the star $K_{1,n-1}$ \cite{slater88} and, $\lambda(G)=n-2$ if and only if $\eta(G)=n-2$ \cite{cahemopepu12}.
\end{proof}

Observe that the only doubly-connected graph of order at most 4 is $P_4$, and notice also that $\overline P_4=P_4$ and $\eta(P_4)=\lambda(P_4)$, which means that $\eta(P_4)+\eta(\overline P_4)=\lambda(P_4)+\lambda(\overline P_4)=4$.

Finally, we present a further Nordhaus-Gaddum-type result for the parameter $\lambda$, which is a direct consequence of the fact that LD-sets in a graph $G$ are very strongly related to LD-sets in its complement $\overline G$.

\begin{proposition}\label{lapropo}
If $S$ is an LD-set of a graph $G$ then $S$ is also an LD-set of $\overline G$, unless there exists a vertex $w\in V\setminus S$  such that $ S\subseteq  N_G(w)$, in which case $S\cup\{w\}$ is an LD-set of $\overline G$.
\end{proposition}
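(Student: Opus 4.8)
The plan is to reduce everything to a single observation about how neighborhood traces behave under complementation. For a vertex $u\in V\setminus S$, its set of neighbors inside $S$ computed in $\overline G$ is exactly the complement within $S$ of its set of neighbors inside $S$ computed in $G$; that is, writing $A_u=N_G(u)\cap S$, one has $N_{\overline G}(u)\cap S=S\setminus A_u$. I would state and verify this identity first, since the hypothesis $u\notin S$ removes any self-adjacency complication and makes the complementation exact.

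From this identity the two defining conditions of an LD-set transfer almost mechanically. Consider first the separation (locating) condition. For distinct $u,v\in V\setminus S$ we have $N_{\overline G}(u)\cap S=N_{\overline G}(v)\cap S$ if and only if $S\setminus A_u=S\setminus A_v$, i.e. if and only if $A_u=A_v$; but since $S$ is an LD-set of $G$, the equality $A_u=A_v$ never occurs. Hence the traces on $S$ remain pairwise distinct in $\overline G$, so separation transfers for free. Moreover it continues to hold after enlarging the code, since passing from $S$ to a superset can only refine the traces and only shrinks the set of vertices that must be separated.

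The only condition that can fail in $\overline G$ is domination, and it fails for a vertex $u\in V\setminus S$ precisely when its trace is empty, that is $S\setminus A_u=\emptyset$, equivalently $A_u=S$, equivalently $S\subseteq N_G(u)$. This is exactly the exceptional vertex $w$ of the statement. A short but essential step here is to note that such a vertex is unique: if $w_1,w_2\in V\setminus S$ both satisfied $S\subseteq N_G(w_i)$, then $A_{w_1}=S=A_{w_2}$, contradicting the separation property of the LD-set $S$ in $G$. Consequently, if no such vertex exists then every $u\in V\setminus S$ has $S\setminus A_u\neq\emptyset$, so $S$ dominates in $\overline G$ and, combined with the separation already established, $S$ is an LD-set of $\overline G$.

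In the remaining case exactly one such $w$ exists, and I would take $S'=S\cup\{w\}$ and check both conditions over the reduced non-code set $V\setminus S'$. Separation is inherited as in the second paragraph. For domination, every remaining $u\in V\setminus S'$ satisfies $u\neq w$, so by uniqueness $A_u\neq S$, whence $N_{\overline G}(u)\cap S=S\setminus A_u\neq\emptyset$ and a fortiori $N_{\overline G}(u)\cap S'\neq\emptyset$; thus $S'$ is an LD-set of $\overline G$. I do not expect a serious obstacle, since the whole argument rests on the complementation identity; the one point demanding care is the uniqueness of $w$, which is what makes the exceptional clause well-posed and guarantees that adjoining precisely that single vertex repairs domination without reintroducing any separation failure.
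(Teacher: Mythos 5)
Your proposal is correct and follows essentially the same route as the paper's proof: both rest on the identity $N_{\overline G}(u)\cap S=S\setminus\bigl(N_G(u)\cap S\bigr)$ for $u\notin S$, deduce that separation transfers to $\overline G$ automatically, and observe that domination can only fail for a vertex $w$ with $S\subseteq N_G(w)$, which is unique by the separation property of $S$ in $G$. Your write-up is merely more explicit than the paper's in verifying that $S\cup\{w\}$ satisfies both LD conditions in $\overline G$, a detail the paper leaves to the reader.
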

\begin{proof}
Take $u,v\in V\setminus S$. Since $S$ is an LD-set of $G$, $\emptyset\not=S\cap N_G(u)\not= S\cap N_G(v)\not=\emptyset$. Hence,
$S\cap N_{\overline G}(u)=S\setminus S\cap N_G(u)\not= S \setminus S\cap N_G(v)= S\cap N_{\overline G}(v)$. At this point we distinguish two cases: if there exists a vertex $w\in V\setminus S$  such that $S\subseteq  N_G(w)$, or equivalently, such that $S\cap N_{\overline G}(w)=\emptyset$, then it is unique as $c_{\stackrel{}{S}}(w)=(1\ldots1)$, and thus $S\cup\{w\}$ is an LD-set. Otherwise, for every vertex $w$, $S\cap N_{\overline G}(w)\not=\emptyset$, which means that $S$ is also an LD-set of $\overline G$.
\end{proof}

\begin{theorem}\label{parecidos}
For every graph $G$,  $|\lambda(G)-\lambda(\overline G)|\le 1.$
\end{theorem}

\begin{proof}
According to Proposition \ref{lapropo}, if $S$ is a $\lambda$-code of $G$, then there exists an LD-set of $\overline G$ of cardinality at most $\lambda(G)+1$, which means that $\lambda(\overline G)\le \lambda(G)+1$. Similarly, it is derived that $\lambda(G)\le \lambda(\overline G)+1$, as $G=\overline{\overline G}$.
\end{proof}

\begin{corollary} Every graph $G$ satisfies: $2\lambda(G)-1\leq\lambda(G)+\lambda(\overline G)\leq 2\lambda(G)+1.$
\end{corollary}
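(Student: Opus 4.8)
The plan is to read the corollary off directly from Theorem~\ref{parecidos}, since it is a purely arithmetic restatement of that bound. By the definition of absolute value, the inequality $|\lambda(G)-\lambda(\overline G)|\le 1$ is equivalent to the two-sided chain $-1\le \lambda(\overline G)-\lambda(G)\le 1$, that is, $\lambda(G)-1\le \lambda(\overline G)\le \lambda(G)+1$. The single step I would then carry out is to add the common quantity $\lambda(G)$ to each of the three terms of this chain; because adding a fixed number preserves all the inequalities, this produces exactly $2\lambda(G)-1\le \lambda(G)+\lambda(\overline G)\le 2\lambda(G)+1$, which is the claimed statement.

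There is essentially no obstacle here: beyond Theorem~\ref{parecidos} the argument contains no graph-theoretic content whatsoever. The only point worth flagging is that one must center the estimate on $\lambda(G)$, rather than on $\lambda(\overline G)$, in order to recover precisely the displayed form. By the symmetry $G=\overline{\overline G}$, running the same unfolding-and-adding argument centered on $\lambda(\overline G)$ would instead yield the companion chain $2\lambda(\overline G)-1\le \lambda(G)+\lambda(\overline G)\le 2\lambda(\overline G)+1$, so the choice of center is merely a matter of presentation and does not affect the validity of the bound.
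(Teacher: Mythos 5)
Your proposal is correct and matches the paper's intent exactly: the paper states this corollary without proof as an immediate consequence of Theorem~\ref{parecidos}, and your unfolding of the absolute value followed by adding $\lambda(G)$ to each term of the chain is precisely the routine arithmetic the authors leave implicit. Your remark about the symmetric companion chain centered on $\lambda(\overline G)$ is a fine observation but not needed for the statement as displayed.
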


\section*{Acknowledgements}
Research partially supported by grants Gen.Cat.DGR 2009SGR1040, MEC MTM2009-07242, MTM2011-28800-C02-01 and by the ESF EUROCORES programme EuroGIGA -ComPoSe IP04- MICINN Project EUI-EURC-2011-4306.

%%%%%%%%%%%%%%%%%%%%%%thebibliography%%%%%%%%%%%%%%%%%%%%%%%%%%%%%%%

\end{document}